\makeatletter \renewcommand{\fnum@figure}{Fig. \thefigure} \makeatother
\newtheorem{theorem}{Theorem}[section]
\newtheorem{examples}[theorem]{Examples}
\newtheorem{lemma}[theorem]{Lemma}
\newtheorem{proposition}[theorem]{Proposition}
\newtheorem{remark}[theorem]{Remark}
\newtheorem{Fact}[theorem]{Fact}
\numberwithin{equation}{section}
\numberwithin{figure}{section}
\begin{document}
\title[Quasi-vertex-transitive Maps on the Plane ]{Quasi-vertex-transitive Maps on the Plane}
\author{Arun Maiti}
\address{Arun Maiti, Department of Mathematics, Indian Institute of Science, 
Bangalore 560 012}
\email{arunmaiti@iisc.ac.in}
\keywords{Vertex-transitive maps, Polyhedral maps, Aperiodic tiles, Semi-
regular tilings}
\subjclass[2010]{52C20; 52B70; 51M20; 05B45 }

\begin{abstract}
Quasi-vertex-transitive maps are the homogeneous maps on the plane
with finitely many vertex orbits under the action of their automorphism
groups. We show that there exist quasi-vertex-transitive maps of
types $[p^3, 3]$ for $p \equiv 1$ (mod $6$), but 
there doesn't exist vertex-transitive map of such types. In particular, we 
determine the surface with the lowest possible genus that admit a polyhedral 
map of type $[5^3, 3]$. 
\end{abstract}

\maketitle

\section{Introduction} 
The subject of maps on surfaces lies at the interfaces of discrete geometry, 
graph theory and combinatorial topology. Maps with symmetry (e.g., vertex-transtive maps)
are objects of interest to group theorists. The concept of quasi-vertex-transitive
map is a natural generalization of the concept of vertex-transitive map on the plane. 
 \newline 
A \textit{map} on a surface $S$ ($2$-manifold) is a simple, connected, locally 
finite graph $G$ embedded on $S$ satisfying: 1) the closure of each 
connected component of $S\setminus G$, called a \textit{face}, is 
homeomorphic to a closed $2$-disc, 2) each vertex of $G$ has degree at least 
$3$. 
\newline 
A vertex $v$ of a map is said be \textit{polyhedral} if the intersection of any
two distinct faces incident to $v$ is either $v$ or an edge incident to $v$.
A map is said to be polyhedral if all its vertices are polyhedral \cite{BS97}. 
\newline
The \textit{vertex-type} of a vertex $v$ of a map is a cyclic sequence, denoted 
by a cyclic-tuple $[k_1, k_2,$ $ \cdots, k_d]$, of the lengths of the faces 
around $v$. A vertex-type and its mirror image will be considered to be the 
same. A map is said to be \textit{homogeneous} if all its vertices are of the 
same type \cite{GS}. The \textit{type} of a homogeneous map is the 
vertex-type of its vertices. A vertex-type is usually written in multiplicative form, e.g., $ 
[5, 5, 3, 3]$ $=[5^2, 3^2]$. Homogeneous maps on the plane are always 
polyhedral (see Lemma \ref{homopol}). 
 \newline
 An \textit{isomorphism} between two maps on a surface $S$ is an isomorphism between
 their underlying graphs that extends to a self-homeomorphism of $S$.
 We say that a map $K$ has \textit{$m$ vertex orbits} if the action of the group
 of automorphisms of $K$, $Aut(K)$, on the set of vertices of $K$ has $m$ orbits.
 \textit{Vertex-transitive maps} are the maps with single vertex orbit. They are obviously 
homogeneous. A map is assumed to be homogeneous, unless otherwise stated.
\newline
A homogeneous map $K$ on the plane is said to be \textit{quasi-vertex-transitive} if it 
has finitely many vertex orbits, or equivalently, it is isomorphic to lift of a homogeneous map 
on some closed surface (see Proposition \ref{vertrancor}). 
We define the \textit{minimal characteristic}(respectively
\textit{minimal polyhedral characteristic})
of a quasi-vertex-transitive map $K$ to be the smallest number $-\chi$ such that $K$ is the
lift of a map (resp. polyhedral map) on a surface (which exists by Lemma \ref{liftmap}) with Euler characteristic $\chi$  .
\newline
It is known that there doesn't exist vertex-transitive maps on the plane
of types $[p^3, 3]$, $p \equiv 1$ (mod $6$) (\cite{GS79}, Proposition \ref{verp}).
Here, we apply the Poincaré polygon theorem to prove the following. 
\begin{theorem}\label{p3qth}There exist quasi-vertex-transitive maps on the plane of 
types $[ p^3, 3]$, $p$ odd, $p \geq 5$.
\end{theorem}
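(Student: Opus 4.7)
By the equivalence recorded just before this theorem, it suffices to construct, for each odd $p\ge 5$, a homogeneous map of type $[p^3,3]$ on some closed surface; lifting this map to the universal cover of the surface then yields the desired quasi-vertex-transitive map on the plane. Since $3\pi(p-2)/p+\pi/3>2\pi$ for $p\ge 5$, the natural geometric setting is the hyperbolic plane $\mathbb{H}^2$. I would first fix a regular hyperbolic $p$-gon $F$ of interior angle $\alpha$ and an equilateral hyperbolic triangle $T$ of interior angle $\beta=2\pi-3\alpha$, with matching edge lengths; then copies of $F$ and $T$ fit isometrically around every vertex of the semi-regular tiling of type $[p^3,3]$ of $\mathbb{H}^2$.

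Next I would design a fundamental polygon $P\subset\mathbb{H}^2$, built as a simply connected union of finitely many copies of $F$ and $T$ with boundary $\partial P$ running along tiling edges, together with a side-pairing $\Phi$ that matches boundary edges in pairs via hyperbolic isometries respecting the tiling. Natural templates for $P$ are coronal regions grown outwards from a central triangle or $p$-gon, chosen so that the boundary edges can be sorted into compatible pairs; each pairing is then realised by a hyperbolic isometry carrying one boundary edge to its partner in a way that aligns the incident faces on either side.

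The Poincar\'e polygon theorem then applies provided (i) each pairing identifies two sides of equal length (automatic by construction) and (ii) the interior angles around each vertex cycle of $P$ sum to $2\pi$. Its conclusion is a discrete, cocompact, torsion-free subgroup $\Gamma\le\mathrm{Isom}(\mathbb{H}^2)$ with $P$ as a fundamental domain, and a closed hyperbolic surface $M:=\mathbb{H}^2/\Gamma$ on which the tiling descends to a map; reading off each vertex cycle then shows that the quotient vertex type is $(p,p,p,3)$ everywhere.

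I expect the main obstacle to lie in the combinatorial design of $P$ and $\Phi$: one must arrange that (ii) holds simultaneously with the preservation of cyclic face type $(p,p,p,3)$ at every quotient vertex. The odd parity of $p$ enters crucially here, since the pattern of edges shared with triangles versus edges shared with other $p$-gons around each $p$-gon boundary is governed by $p\bmod 2$, and this parity dictates which boundary pairings produce vertex cycles compatible with both requirements. Once $(P,\Phi)$ is pinned down, the lift of the quotient map to the universal cover is the required quasi-vertex-transitive map of type $[p^3,3]$ on the plane.
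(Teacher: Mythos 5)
Your overall strategy is the same as the paper's: build a compact fundamental polygon, apply the Poincar\'e polygon theorem to get a discrete cocompact group with proper ($2\pi$) vertex cycles, pass to the closed quotient surface, and invoke the equivalence of Proposition \ref{vertrancor} (or simply lift back) to conclude quasi-vertex-transitivity. However, the proposal stops exactly where the actual mathematical content of the theorem begins. You write that you ``would design'' a polygon $P$ and a side-pairing $\Phi$, and you yourself identify the simultaneous satisfaction of the angle-cycle condition and the preservation of the vertex type $(p,p,p,3)$ as ``the main obstacle'' --- but you never produce $P$, never specify $\Phi$, never list the vertex cycles, and never verify any angle sums. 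The role of the parity of $p$ is likewise only gestured at. Since the theorem is an existence statement whose entire burden is the explicit exhibition of such a $(P,\Phi)$, this is a genuine gap, not a stylistic omission: nothing in the proposal rules out the possibility that no coronal region of copies of $F$ and $T$ admits a compatible side-pairing for a given odd $p$.

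For comparison, the paper resolves this by working with the \emph{dual} configuration rather than with the tiles themselves: it joins the incenters of the three $p$-gons and one triangle around a vertex to form a hyperbolic quadrilateral $P_4$ with angles $2\pi/p,\,2\pi/3,\,2\pi/p,\,2\pi/p$, arranges $p$ reflected/rotated copies of $P_4$ around a vertex to obtain a $2p$-gon, promotes the midpoints of two specified sides to vertices to get a $(2p+2)$-gon $F(p)$, and writes down an explicit side-pairing whose elliptic cycles all have angle sum exactly $2\pi$ (here the oddness of $p$ enters concretely, through the indexing $i=4,6,\dots,p-1,p+5,\dots,2p$ of the paired sides and the placement of the two half-sides). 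The quotient map is then the dual of a map of type $[p^3,3]$ with at most $p$ vertex orbits. If you want to salvage your version, which pairs faces rather than dual cells, you would need to carry out the analogous explicit bookkeeping: a concrete boundary word for your coronal region, a pairing rule depending on $p \bmod 2$, and a cycle-by-cycle verification that every identified vertex class has total angle $2\pi$ and local face pattern $(p,p,p,3)$.
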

To the author's knowledge, $[p^3, 3]$ with $p \equiv 1$ (mod $6$) are
the first known examples of cyclic-tuples that are the types of quasi-vertex-transitive
maps but not the types of vertex-transitive maps on the plane.
\newline
The cyclic-tuple $[5^3, 3]$ is of special importance to us and we have the following. 
\begin{theorem}\label{53th}There exists a quasi-vertex-transitive map on the plane of 
type $[ 5^3, 3]$ with the minimal characteristic $1$ and the minimal polyhedral
characteristic $2$. 
\end{theorem}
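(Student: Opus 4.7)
The plan is to combine an Euler-characteristic computation with the Poincaré polygon construction used in Theorem \ref{p3qth} and a separate combinatorial obstruction to polyhedrality at the minimal characteristic.

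For the Euler-characteristic constraint, let $K$ be a homogeneous type-$[5^3,3]$ map with $V$ vertices on a closed surface $S$. Each vertex has degree $4$, so $E = 2V$; incidence counting gives $F_5 = 3V/5$ and $F_3 = V/3$, hence
\begin{equation*}
\chi(S) \;=\; V - 2V + \frac{3V}{5} + \frac{V}{3} \;=\; -\frac{V}{15}.
\end{equation*}
Thus $V$ is a positive multiple of $15$ and $-\chi \geq 1$, with equality forcing $(V,E,F_5,F_3) = (15,30,9,5)$. To realise $-\chi = 1$ I would specialise the construction of Theorem \ref{p3qth} to $p = 5$: apply Poincaré's polygon theorem to a hyperbolic fundamental polygon whose side-pairings produce exactly $15$ vertex orbits in the quotient, which then has $\chi = -1$ and lifts to the planar map.

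The crux is showing that no $V = 15$ realisation is polyhedral. Since $F_3 = 5$ and each vertex lies on a unique triangle, the five triangles are pairwise vertex-disjoint and partition the $15$ vertices. The local structure at each vertex forces the three incident pentagons into a cyclic pattern $P_1 P_2 P_3$ in which the middle $P_2$ contributes two pentagon-pentagon edges at that vertex while $P_1, P_3$ each contribute one pentagon-pentagon edge and one pentagon-triangle edge. Combining this with the global counts---each pentagon $P$ has $t(P) \in \{0,1,2\}$ triangle-edges and $\sum t(P) = 15$ over the nine pentagons---forces the triangle-edge distribution into a short list of cases. In each case a pigeonhole/parity argument should produce a pair of distinct faces whose intersection is disconnected, violating polyhedrality. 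Carrying out this case analysis rigorously is the main obstacle I anticipate.

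Finally, to realise the minimal polyhedral characteristic $2$, I would pass to a $2$-fold cover of the $\chi = -1$ example---for instance its orientation double cover, if that example is non-orientable---producing a type-$[5^3,3]$ map on a surface with $\chi = -2$ and $V = 30$ that still lifts to the plane map of Theorem \ref{p3qth}. In a double cover, each face-pair intersection that was disconnected in the base is resolved into two connected pieces, and a direct verification of all face-pair intersections in the cover should then confirm polyhedrality.
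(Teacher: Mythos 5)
Your skeleton matches the paper's: the Euler count $\chi = -V/15$ forcing $-\chi \geq 1$, the need for a (necessarily non-polyhedral) example at $\chi=-1$, an impossibility argument for polyhedral maps at $\chi=-1$, and a polyhedral example at $\chi=-2$. But each of the three substantive steps is left undone, and two of your proposed shortcuts would fail. The heart of the theorem is the non-existence of a polyhedral $[5^3,3]$ map on the $\chi=-1$ surface. You correctly reduce to the distributions $(6,3,0)$ and $(7,1,1)$ of triangle-edges among the nine pentagons (equivalently the paper's pentagon types $3$, $4$, $5$), but you then explicitly defer the case analysis. In the paper that analysis \emph{is} essentially the whole proof: one case exploits three mutually disjoint type-$4$ pentagons covering all $15$ vertices, and the other traces a vertex lying on no type-$4$ pentagon through some ten subcases against explicit local diagrams. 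Nothing in the ``pigeonhole/parity argument should produce a pair of distinct faces whose intersection is disconnected'' sketch substitutes for this.

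Two steps as proposed would moreover fail. First, specialising the construction of Theorem \ref{p3qth} to $p=5$ does not produce a quotient surface with $\chi=-1$: the side-pairing group there has torsion (the elliptic cycles $\{v'_p\}$, $\{v'_{p+3}\}$ and the singleton cycles $\{v_i\}$ have angle sums $\pi$ and $2\pi/p$), so the quotient is a hyperbolic orbifold with cone points of orders $2$ and $5$ and orbifold Euler characteristic $-3/5$, not a closed surface carrying a $15$-vertex map. One needs an explicit map on $\mathbb{R}P^2\#\mathbb{R}P^2\#\mathbb{R}P^2$, which the paper exhibits combinatorially (Figure \ref{53}(a)). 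Second, the claim that in a double cover ``each face-pair intersection that was disconnected in the base is resolved into two connected pieces'' is not true in general: whether two components of $F_1\cap F_2$ get separated in a cover depends on whether a path joining them through the two faces closes up to a loop that fails to lift, and the orientation double cover only unwinds orientation-reversing loops. Lemma \ref{liftmap} guarantees \emph{some} finite polyhedral cover, but gives no control on its Euler characteristic; to pin the minimal polyhedral characteristic at $2$ one must actually exhibit a polyhedral quotient on a $\chi=-2$ surface whose lift to the plane is the same map, as the paper does with the explicit examples in Figures \ref{53}(b) and \ref{53}(c).
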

 This implies, in particular, that all the hyperbolic surfaces ($\chi<0$) admit a 
map of type $[5^3, 3]$, but none of them are vertex-transitive, the type is unique in 
that sense. 
\newline
A map $K$ on a surface $S$ is said to be \textit{geometric} if $S$ admits a 
metric with constant curvature with respect to which the faces of $K$ are 
regular polygons in $S$. One of the interesting facts about homogeneous 
maps is that they can always be made geometric (see Fact \ref{geom}). So, 
the results of this paper can also be presented in a geometric way.
Tilings produced by geometric vertex-transitive and homogeneous maps on a surface are known as uniform and semi-regular tilings respectively.
\newline
It is not very difficult to construct maps of types $[p^3, 3]$, $p \geq 5$ on the plane
with infinitely many vertex orbits. Our work was motivated by the problem of finding a
finite aperiodic set of regular polygons (tiles) for the hyperbolic plane,
or even stronger, a cyclic-tuple $\mathfrak{k}$ such that there exist homogenous
maps on the plane of the type $\mathfrak{k}$ but none of them are quasi-vertex-transitive, thereby strengthening the result of \cite{MM98}. 
\section{Preliminaries}\label{preli}
 A cyclic-tuple $\mathfrak{k} = [k_1, k_2, \cdots, k_d]$ with $d \geq 3$ is said to be of the 
\textit{spherical, Euclidean, or hyperbolic type} if the \enquote{angle sum} $ 
\alpha(\mathfrak{k})=\sum^{d}_{i=1}\frac{k_{i} -2}{k_{i}} $ is $<2, =2$, or $>2$ 
respectively. 
 \begin{Fact}[\cite{LB97, DG18}] \label{fact1} For a cyclic-tuple $\mathfrak{k} = 
[k_1, k_2, \cdots, k_d]$ with $d \geq 3$ and $\alpha(\mathfrak{k})<2,=2,$ or $ >2$, there exist 
spherical, Euclidean, or hyperbolic regular $k_i$-gons (with geodesic sides)
for $i=1, \cdots, d$, respectively, which fits around a vertex, or equivalently,
their inner angles $\theta_i$ sum to $2 \pi$, that is, $ \sum^d_{i=1} \theta_i=2 \pi$. 
 \end{Fact}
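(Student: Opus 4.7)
The parenthetical equivalence in the statement reduces the problem to producing, for each $i$, an interior angle $\theta_i$ that can be realized by a regular $k_i$-gon in the specified model geometry, such that $\sum_{i=1}^d \theta_i = 2\pi$. No common side length is imposed by the statement, so the $\theta_i$ can be treated as independent parameters, and the argument splits naturally into the three curvature regimes.

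The Euclidean case is immediate: a Euclidean regular $k$-gon has interior angle $(k-2)\pi/k$ regardless of size, so $\sum_i \theta_i = \pi\alpha(\mathfrak{k})$, which equals $2\pi$ precisely when $\alpha(\mathfrak{k})=2$. For the spherical and hyperbolic cases, the key step is to pin down, for each fixed $k\geq 3$, the range of interior angles attainable by a geodesic regular $k$-gon as its size varies. The cleanest tool is Gauss--Bonnet: a geodesic regular $k$-gon of area $A$ has interior angle
\[
 \theta_k \;=\; \frac{(k-2)\pi + \varepsilon A}{k}, \qquad \varepsilon = +1 \text{ (spherical)},\ \varepsilon = -1 \text{ (hyperbolic)}.
\]
Letting $A$ sweep continuously over $(0,2\pi)$ on the sphere and over $(0,\infty)$ in the hyperbolic plane shows that the attainable $\theta_k$ fills the open interval $\bigl((k-2)\pi/k,\pi\bigr)$ in the spherical case and $\bigl(0,(k-2)\pi/k\bigr)$ in the hyperbolic case. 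Each such angle is realized by a genuine regular polygon, e.g.\ via the standard construction gluing $k$ congruent isosceles geodesic triangles with apex angle $2\pi/k$ along their equal sides and then varying the circumradius.

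A componentwise intermediate-value argument then finishes the proof. In the spherical setting the set of achievable sums $\sum_i \theta_i$ is the open interval $\bigl(\pi\alpha(\mathfrak{k}), d\pi\bigr)$; the hypotheses $\alpha(\mathfrak{k})<2$ and $d\geq 3$ give $\pi\alpha(\mathfrak{k})<2\pi<d\pi$, so $2\pi$ lies in this interval and admissible $\theta_i$ can be chosen. In the hyperbolic setting the corresponding interval is $\bigl(0,\pi\alpha(\mathfrak{k})\bigr)$, which contains $2\pi$ iff $\alpha(\mathfrak{k})>2$, the standing hypothesis. The only step that is not automatic is verifying that the area-to-angle map is a continuous surjection onto the stated open interval, and this is the likely main obstacle; however, once the Gauss--Bonnet identity above is in hand it is a routine check of continuity and strict monotonicity in $A$.
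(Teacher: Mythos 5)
The paper does not prove this statement at all --- it is quoted as a Fact with citations to \cite{LB97, DG18} --- so there is no in-paper argument to compare against, and your proof has to stand on its own. As a proof of the statement as literally written, it is essentially correct: the Gauss--Bonnet computation of the attainable angle ranges $\bigl((k-2)\pi/k,\pi\bigr)$ (spherical) and $\bigl(0,(k-2)\pi/k\bigr)$ (hyperbolic) is right, and the componentwise intermediate-value step correctly places $2\pi$ inside $\bigl(\pi\alpha(\mathfrak{k}),d\pi\bigr)$, resp.\ $\bigl(0,\pi\alpha(\mathfrak{k})\bigr)$, under the stated hypotheses (note that $d\geq 3$ is used exactly once, to get $2\pi<d\pi$ in the spherical case). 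One small slip: in the hyperbolic plane the area of a geodesic $k$-gon cannot sweep over $(0,\infty)$; it is bounded above by $(k-2)\pi$, the area of the ideal $k$-gon, and it is precisely this bound that makes your angle interval $\bigl(0,(k-2)\pi/k\bigr)$ come out correctly.

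The substantive caveat is that your componentwise choice produces $d$ polygons with generally \emph{different} side lengths. This satisfies the statement as phrased (the paper itself declares ``fits around a vertex'' to be equivalent to $\sum_i\theta_i=2\pi$), but it is not the version the rest of the paper uses: in the proof of Theorem \ref{p3qth} and in Fact \ref{geom} the polygons must meet edge-to-edge around the vertex, which forces a single common side length. The standard argument in the cited sources therefore runs a \emph{one-parameter} intermediate value argument: give every polygon the same side length $\ell$ and set $f(\ell)=\sum_i\theta_{k_i}(\ell)$; in the hyperbolic case $f$ decreases continuously and strictly from $\pi\alpha(\mathfrak{k})$ to $0$, so it passes through $2\pi$ exactly when $\alpha(\mathfrak{k})>2$, and the resulting configuration is genuinely edge-to-edge. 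Your componentwise version does not deliver this, and the distinction is not cosmetic: in the spherical case the common-side-length statement can actually fail for tuples with $\alpha(\mathfrak{k})<2$ that are not types of maps (for $[3,3,4]$ one checks $2\theta_3(\ell)+\theta_4(\ell)<2\pi$ for every admissible common $\ell$, with supremum $2\pi$ attained only in the degenerate hemispherical limit). So your weaker reading is the one that is true in the stated generality, but you should flag that the edge-to-edge (common side length) refinement is what the applications in Section \ref{periodic} actually require, and supply the one-parameter monotonicity argument for the hyperbolic case.
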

A map on a surface lifts to a map on one of the universal covers, the sphere, $S^2$
or the plane. On the other hand, any geometric map on a surface is quotient of 
a geometric map on one of the universal covers, $S^2$, the Euclidean plane, 
$\mathbb{E}^2$, or the hyperbolic plane, $\mathbb{H}^2$, under the free action of 
a discrete group of isometries. Thus, Fact \ref{fact1} implies the following. 
\begin{Fact} [\cite{LB97, CG08, DG18}] (Geometrization) \label{geom}
A homogeneous map on a surface $S$ is isomorphic to a geometric map on 
$S$. Moreover, each automorphism of a geometric map on $S$ corresponds 
to an unique isometry of $S$.
\end{Fact}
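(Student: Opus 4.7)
The plan is to work on the universal cover $\tilde S$ of $S$, build a geometric tiling there out of the regular polygons guaranteed by Fact \ref{fact1}, and then descend by a discrete group of isometries. By uniformization, $\tilde S$ is one of $S^2$, $\mathbb{E}^2$, or $\mathbb{H}^2$, and $S = \tilde S / \Gamma$ for a discrete group $\Gamma$ of isometries acting freely. First I would lift $K$ to a homogeneous map $\tilde K$ on $\tilde S$ of the same vertex-type $\mathfrak{k} = [k_1, \ldots, k_d]$ and verify, by a Gauss--Bonnet / angle-deficit argument on a compact fundamental region (or directly when $\tilde S = S^2$), that the sign of $\alpha(\mathfrak{k}) - 2$ must match the sign of the curvature of $\tilde S$; thus the regular $k_i$-gons provided by Fact \ref{fact1} live in the correct model geometry.

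Next I would construct a geometric realization $\tilde K_g$ of $\tilde K$ by an inductive (Poincaré-style) patching. Place congruent regular $k_i$-gons around a chosen base vertex $v_0 \in \tilde K$ in the cyclic order prescribed by $\mathfrak{k}$; by Fact \ref{fact1} their inner angles sum to $2\pi$, closing the star isometrically. Then propagate outward along a breadth-first tree in the dual graph of $\tilde K$, at each step completing the star of a newly reached vertex using homogeneity. Simple-connectivity of $\tilde S$ rules out monodromy, so the construction produces a geometric tiling of $\tilde S$ combinatorially isomorphic to $\tilde K$. The deck group $\Gamma$ acts on $\tilde K$ by map automorphisms permuting congruent regular polygons in a flag-preserving way, and (as shown in the next paragraph) any such permutation is realized by an isometry; hence $\Gamma$ acts by isometries on $\tilde K_g$, and the quotient $\tilde K_g / \Gamma$ is a geometric map on $S$ isomorphic to $K$.

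For the moreover, let $\phi$ be an automorphism of a geometric map on $S$ and lift it to a combinatorial automorphism $\tilde\phi$ of $\tilde K_g$. On each face $F$, the restriction $\tilde\phi|_F$ sends a regular $k_i$-gon to a congruent regular $k_i$-gon while respecting the flag structure, so it is the unique isometry between them realizing that combinatorial identification. Adjacent face-isometries agree on their shared edge, since edge-lengths are determined by $\mathfrak{k}$ alone, so they glue into a global self-isometry of $\tilde S$. This isometry is $\Gamma$-equivariant and descends to an isometry of $S$ realizing $\phi$, and it is unique because isometries of model spaces are determined by the image of a single flag.

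The main obstacle will be the patching in the middle step: one must check that the inductive placement of geometric polygons is consistent around every closed loop in the $1$-skeleton of $\tilde K$, so that no contradictory metric identifications arise along edges that cobound two propagation paths. This is the standard content of the Poincaré polygon theorem that the author later invokes for Theorem \ref{p3qth}, and once it is in hand, the remaining work reduces to the rigidity of regular polygons.
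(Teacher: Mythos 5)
Your proposal follows essentially the same route the paper takes and delegates to \cite{LB97, CG08, DG18}: lift $K$ to the universal cover singled out by the sign of $\alpha(\mathfrak{k})-2$, assemble the regular polygons supplied by Fact \ref{fact1} into a geometric tiling by a developing/Poincar\'e-type patching argument, and descend by the deck group realized as a free discrete group of isometries, with the rigidity of regular polygons giving the \enquote{moreover} clause. The consistency-of-patching step you correctly flag as the main obstacle is exactly the content the paper outsources to the cited references, so your argument is a faithful expansion of the paper's sketch rather than a different approach.
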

If a cyclic-tuple $\mathfrak{k}$ is the type of a homogeneous map on the 
sphere, then by the Euler formula $\alpha(\mathfrak{k})<2$. All such types are 
known, they correspond to the boundaries of Platonic \& Archimedean solids, 
the prism, the pseudorhombicuboctahedron, and additionally two infinite 
families, $ [4^2, r]$ for some $r \geq 5$ and $[3^3, s]$ for some $s \geq 4$
(antiprisms) \cite{LB91, BG09}. 
\newline
If a cyclic-tuple $\mathfrak{k}$ is the type of a homogeneous map on the 
plane, then it can be deduced from the Euler formula for finite planar graph 
that $\alpha(\mathfrak{k}) \geq 2$ \cite{CG08}. If $\alpha(\mathfrak{k}) =2$, 
then $K$ is isomorphic to a geometric map on the Euclidean plane, there
are exactly $11$ such types \cite{GS77}. The maps on the torus and the Klein 
bottles are naturally quotients of maps of those $11$ types \cite{DM18}.
\newline
The following fact should be well-known though the author was not able to 
trace a reference. 
\begin{lemma}\label{homopol} 
Homogeneous maps on simply connected surfaces are always polyhedral. 
\end{lemma}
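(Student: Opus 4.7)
The plan is to invoke the geometrization Fact \ref{geom} to pass to a geometric realization of the homogeneous map $K$ on the simply connected surface $S$: each face then becomes a convex regular geodesic polygon on one of $S^2$, $\mathbb{E}^2$, $\mathbb{H}^2$. Since polyhedrality is a combinatorial property preserved under map isomorphism, it suffices to establish it for the geometric model.

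The local picture at a vertex $v$ is controlled by Fact \ref{fact1}: the inner angles of the incident faces at $v$ sum to $2\pi$. Combined with the disjointness of face interiors and the fact that each face is a closed topological disk with embedded boundary circle, this shows that the faces incident to $v$ are arranged around $v$ as closed angular sectors with pairwise disjoint interiors, any two consecutive sectors sharing a bounding ray (an edge at $v$).

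Take two distinct faces $F, F'$ incident to $v$, with sectors $\sigma, \sigma'$ at $v$. If $\sigma \cap \sigma' = \{v\}$, suppose for contradiction that some $w \neq v$ lies in $F \cap F'$. Convexity of $F$ and $F'$ forces the geodesic segment $[v,w]$ into $F \cap F'$, so a small initial piece lies in $\sigma \cap \sigma'$, contradicting $\sigma \cap \sigma' = \{v\}$; hence $F \cap F' = \{v\}$. Otherwise $\sigma$ and $\sigma'$ share a bounding ray at $v$, giving a common edge $e$; then $F$ and $F'$ lie in opposite closed halfspaces bounded by the geodesic line $L$ through $e$, and each meets $L$ only along $e$ (a regular polygon of at least three sides has no two collinear sides), so $F \cap F' \subseteq L$, whence $F \cap F' = e$. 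Either way $F \cap F'$ is either $\{v\}$ or an edge incident to $v$, which is polyhedrality at $v$.

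The main obstacle is ensuring the convexity argument is valid on $S^2$, where geodesic convexity can misbehave for large polygons. However, spherical-type cyclic-tuples ($\alpha(\mathfrak{k}) < 2$) are classified, and the angle constraint forces all face sizes and valencies to be small enough that each regular face lies inside an open hemisphere, where geodesics between its points are unique minimizers and remain inside the polygon; this lets the argument above apply uniformly to the spherical, Euclidean, and hyperbolic cases.
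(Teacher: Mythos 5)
Your proposal is correct and takes essentially the same route as the paper: pass to a geometric realization via Fact \ref{geom}, then use convexity of the regular faces together with the angular decomposition at each vertex (your sector-plus-convexity argument is just a rephrasing of the paper's observation that the geodesic rays extending the edges at $v$ confine each incident face to one component). The only real divergence is the spherical sub-case, where the paper rules out trouble by showing no face around $v$ has the antipode $v_{\infty}$ as a vertex (using simplicity of the graph and that distinct great circles meet only at antipodal points), while you appeal to the classification of spherical types to place each face in an open hemisphere --- both suffice.
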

\begin{proof}Let $K$ be a homogeneous map on a simply connected
surface of type $\mathfrak{k}=[k_1, k_2,$ $ \cdots, k_d]$. By Fact \ref{geom} above,
we may assume that $K$ is a geometric map on $\mathbb{E}^2$, $\mathbb{H}^2$ or
 $S^2$, depending on $\alpha(\mathfrak{k}) =2, >2$ or $<2$ respectively.
 A geodesic ray on a surface $S$ with initial point $x \in S$ is a geodesic
 $ \gamma: [0, \infty) \rightarrow S$ with $\gamma(0)=x$.
\newline
Suppose $K$ is a geometric map on $\mathbb{E}^2$ or $\mathbb{H}^2$.
Then for any vertex $v$ of $K$, the geodesic rays with initial point $v$ that
extend the $d$ edges incident to $v$, divide $\mathbb{E}^2$ or $\mathbb{H}^2$
respectively into $d$ connected components. So, the interior of each of the faces
(regular and therefore convex) surrounding $v$ lies in only one of these components.
Note that by definition of a map, $d \geq 3$. It follows that $K$ is polyhedral.
\newline
Suppose $K$ is a geometric map on $S^2$. Then for any vertex $v$ of $K$, the
shortest geodesics (or the semi-circles) between $v$ and its antipodal point
$v_{\infty}$ that extend the $d$ edges incident to $v$, divide $S^2$ into $d$
connected components. The interior of each of the faces surrounding $v$ lies
in only one of these components. Since two distinct great circles meet only at
antipodal points and the underlying graph of $K$ is simple, therefore none of the faces
(regular) surrounding $v$ have $v_{\infty}$ as a vertex. It follows that $K$ is polyhedral.
 \end{proof}
But the same cannot be said about homogeneous maps on non-simply 
connected surfaces, as we shall see an example in Figure \ref{53}(a) below. 
The following folklore, however, asserts that polyhedrality is a weak condition 
for homogeneous maps on any surface. 
\begin{lemma}\label{liftmap}Let $K$ be a homogeneous map on
a surface $S$. Then, there is a finite covering $\tilde{S}$ of $ S$
such that the lift of $K$ to $\tilde{S}$ is a polyhedral map. 
\end{lemma}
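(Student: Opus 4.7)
The plan is to first lift $K$ to the universal cover of $S$, where Lemma \ref{homopol} makes the lift polyhedral, and then to descend to a finite normal intermediate cover chosen so as to kill, in one stroke, every deck transformation that could create a superfluous face--face intersection. Throughout I would assume $S$ is closed; for a simply connected $S$ the statement is immediate from Lemma \ref{homopol}, and the subsequent uses of Lemma \ref{liftmap} in the paper are on closed surfaces.

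Let $\pi\colon\tilde S\to S$ denote the universal cover, with deck group $\Gamma=\pi_1(S)$ acting freely and properly discontinuously, and let $\tilde K$ be the lift of $K$; by Lemma \ref{homopol}, $\tilde K$ is polyhedral. For each of the finitely many vertices $v$ of $K$, fix a lift $\tilde v\in\tilde S$ and let $\tilde F_1,\ldots,\tilde F_d$ be the $d$ closed faces of $\tilde K$ meeting $\tilde v$. Compactness of each closed face, together with proper discontinuity of the $\Gamma$-action, implies that
\[
B\;=\;\bigl\{\gamma\in\Gamma\setminus\{e\}\,:\,\tilde F_i\cap\gamma\tilde F_j\neq\emptyset\text{ for some vertex }v\text{ and some }i,j\bigr\}
\]
is a finite subset of $\Gamma$. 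I would then invoke residual finiteness of $\Gamma$---which holds for the fundamental group of every closed $2$-manifold, be it trivial, virtually abelian, or a cocompact (non-)Euclidean crystallographic group---to pick, for each $\gamma\in B$, a finite-index normal subgroup $N_\gamma$ of $\Gamma$ with $\gamma\notin N_\gamma$, and set $N=\bigcap_{\gamma\in B}N_\gamma$. Then $N$ is a finite-index normal subgroup of $\Gamma$ with $N\cap B=\emptyset$.

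The cover $\tilde S_N:=\tilde S/N\to S$ is finite, and I claim the lift $K_N$ of $K$ to it is polyhedral. To verify, let $v'$ be a vertex of $K_N$ and $F_1',F_2'$ two distinct faces incident to $v'$; lifting through the quotient $q\colon\tilde S\to\tilde S_N$ picks some $\tilde v\in\tilde S$ and adjacent face-lifts $\tilde F_i,\tilde F_j$. A direct calculation gives
\[
F_1'\cap F_2'\;=\;q\!\left(\bigcup_{n\in N}\tilde F_i\cap n\tilde F_j\right).
\]
The $n=e$ term is the polyhedral intersection of $\tilde F_i,\tilde F_j$ at $\tilde v$ in $\tilde K$; every $n\in N\setminus\{e\}$ lies outside $B$ and so contributes the empty set. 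Hence $F_1'\cap F_2'=q(\tilde F_i\cap\tilde F_j)$ is either $\{v'\}$ or a single edge through $v'$, as required. A minor secondary check is that $F_1',F_2'$ are indeed distinct: an equality would force $\tilde F_i=n\tilde F_j$ for some $n\in N$, which for $i\neq j$ puts $n$ into $B$, contradiction.

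The main obstacle is arranging that a \emph{single} finite-index subgroup simultaneously avoids every bad deck transformation: this is exactly the content of residual finiteness of closed surface groups, a classical but non-trivial fact (the interesting case being cocompact Fuchsian groups). Once one sees that $B$ is finite---which in turn relies on compactness of faces and proper discontinuity of $\Gamma$---the rest is bookkeeping in covering-space theory.
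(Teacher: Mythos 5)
Your argument is essentially the paper's own: lift $K$ to the universal cover where Lemma \ref{homopol} gives polyhedrality, use residual finiteness of $\pi_1(S)$ to find a finite-index \emph{normal} subgroup avoiding the finitely many deck transformations that could cause a bad face--face overlap, and descend to the corresponding finite normal cover. The paper packages the finiteness via a compact neighbourhood $C$ of a fundamental domain on which the covering map must be injective, whereas you package it as the finite bad set $B\subset\Gamma$; these are the same argument, and your write-up of the descent step is, if anything, more explicit than the paper's.
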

\begin{proof} 
The proof essentially follows from the above lemma and the residual
finiteness property of the surface groups. If $S$ is the real projective
plane, the sphere, or the plane, then the proof directly follows
from the above lemma. In all other cases, i.e., if $S$ is closed
and $\chi(S) \leq 0$, then a map $K$ on $S$ lifts to a map $K_U$ on the universal cover
of $S$, the plane. Let $F$ be a minimal subset of the set of faces
of $K_U$ that generates $K_U$ under the action of the fundamental group $\pi_1(S)$. 
Here one may assume that the union of the faces in $F$ is a
fundamental domain for $\pi_1(S)$.
Let $C$ be the union of the faces that intersect any face in $F$. Since $\pi_1(S)$ is
residually finite \cite{PS78}, there is a finite covering $f: \tilde{S} \rightarrow S$
such that the covering map $\tau_{\tilde{S}}: \mathbb{R}^2 \rightarrow \tilde{S}$ is
injective on the compact set $C$. This means that all the vertices in $\tau_{\tilde{S}}(F)$
are polyhedral. Now, since every finite index subgroup of a group $\mathcal{G}$ contains
a finite index normal subgroup of $\mathcal{G}$, so we can choose $\tilde{S}$ to be
a normal covering. Then all the vertices in $\tau_{\tilde{S}} (K_U)$ are polyhedral. 
But $\tau_{\tilde{S}} (K_U)$ is nothing but the lift of $K$ to $\tilde{S}$, thus the proof 
follows. 
\end{proof}
\begin{remark} The proof of the above lemma also works for homogeneous 
maps with singular faces, faces that are not contractible but their interiors are. 
\end{remark}
Now, as mentioned in the introduction, we prove the following. 
\begin{proposition}\label{vertrancor}
A homogeneous map on the plane is quasi-vertex-transitive if and only if it is 
isomorphic to lift of a map on some closed surface. 
\end{proposition}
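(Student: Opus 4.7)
\medskip

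\textbf{Plan.} The proof splits into the two implications of the biconditional, and I would handle them separately.

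For the easy direction ($\Leftarrow$), suppose $K$ is the lift of a map $\bar K$ on a closed surface $S$ via a covering $\pi:\mathbb{R}^2\to S$. The deck transformation group $\Delta$ of $\pi$ permutes the preimages of vertices, edges and faces of $\bar K$, so $\Delta\subseteq \mathrm{Aut}(K)$. The $\Delta$-orbits on the vertex set of $K$ are in bijection with the (finitely many) vertices of $\bar K$. Because $\mathrm{Aut}(K)$-orbits are coarser than $\Delta$-orbits, $K$ has finitely many vertex orbits.

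For the forward direction ($\Rightarrow$), assume $\mathrm{Aut}(K)$ has finitely many vertex orbits. By Fact \ref{geom} I may realize $K$ as a geometric map on $\mathbb{E}^2$ or $\mathbb{H}^2$ (the case $\alpha(\mathfrak k)<2$ being excluded since the domain is the plane), and then identify $G:=\mathrm{Aut}(K)$ with a group of isometries. The first step is to verify that $G$ is a discrete subgroup of $\mathrm{Isom}(\mathbb{E}^2)$ or $\mathrm{Isom}(\mathbb{H}^2)$: the stabilizer of any vertex $v$ is finite since it embeds in the symmetry group of the cyclic vertex figure at $v$, and $G$ permutes the discrete vertex set of $K$. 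The second step is to show $G$ acts cocompactly on the plane: if $v_1,\dots,v_m$ are representatives of the vertex orbits, then $C:=\bigcup_{i=1}^m \overline{\mathrm{star}(v_i)}$ is compact, and its $G$-translates cover $\mathbb{R}^2$ because every vertex lies in some $G\cdot v_i$ and every point of $\mathbb{R}^2$ belongs to the closed star of some vertex.

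Having established that $G$ is a discrete cocompact group of isometries of the plane, I would invoke residual finiteness (via the Bieberbach theorems in the Euclidean case, and Selberg's lemma together with \cite{PS78} in the hyperbolic case) to produce a torsion-free normal subgroup $\Gamma\triangleleft G$ of finite index. Then $\Gamma$ acts freely, properly discontinuously and cocompactly by isometries on $\mathbb{R}^2$, so $S:=\Gamma\backslash\mathbb{R}^2$ is a closed surface, and the quotient map $\pi:\mathbb{R}^2\to S$ descends $K$ to a homogeneous map $\bar K=\Gamma\backslash K$ on $S$, whose lift is manifestly $K$.

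The step I expect to be most delicate is the cocompactness claim, which relies on combining finite vertex orbits with local finiteness of $K$ and connectivity of the underlying graph; the rest reduces to the now-standard machinery of cocompact crystallographic and Fuchsian groups already alluded to in the proof of Lemma \ref{liftmap}.
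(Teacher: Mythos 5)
Your proposal is correct and follows essentially the same route as the paper: geometrize via Fact \ref{geom}, show that $\mathrm{Aut}(K)$ is a cocompact (hence finitely generated) discrete group of isometries of $\mathbb{E}^2$ or $\mathbb{H}^2$, pass to a torsion-free finite-index subgroup via Selberg's lemma, and take the quotient. The only difference is cosmetic: the paper certifies finite generation by exhibiting a compact fundamental polygon built from the $m$ dual faces, whereas you get cocompactness directly from the closed stars of the $m$ orbit representatives and also spell out discreteness, which the paper leaves implicit.
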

\begin{proof} The "only if" part of the statement is obvious. For the "if" part, let $K$
be a homogeneous map on the plane of the hyperbolic type (resp. the Euclidean 
type) with $m$ vertex orbits. By Fact \ref{geom}, we may assume that $K$ is geometric 
and $Aut(K)$ to be a subgroup of isometries of $\mathbb{H}^2$ (resp. $\mathbb{E}^2$).
Due to the fact that $Aut(K)$ is also the automorphism group of the dual of $K$ 
acting with $m$ face orbits, the fundamental polygon, $F$ of the dual map consists 
of $m$ faces of the dual. This also means that $F$ is fundamental polygon for 
a subgroup $\mathcal{H}_1$ of $Aut(K)$. $\mathcal{H}_1$ must be 
finitely generated because fundamental polygon of infinitely generated 
subgroup of isometries of $\mathbb{H}^2$ (resp. $\mathbb{E}^2$) have 
infinitely many sides. 
\newline
Now by Selberg's lemma \cite{AL87}, $\mathcal{H}_1$ has a finite index, 
torsion free subgroup $\mathcal{H}_2$ of $Aut(K)$. This means $\mathcal{H}
_2$ is a discrete subgroup of isometries of $\mathbb{H}^2$ (resp. $\mathbb{E}
^2$) acting freely on $\mathbb{H}^2$ (resp. $\mathbb{E}^2$). It follows that 
$K/ \mathcal{H}_2$ is a homogeneous map on the closed surface $\mathbb{H}
^2/ \mathcal{H}_2$ (resp. $\mathbb{E}^2/ \mathcal{H}_2$).
\end{proof}
The work of enumeration of vertex-transitive maps on closed surfaces was 
done in \cite{LB91} in terms of their genus. The Euler formula produces a 
bound on the number of possible types of homogeneous maps on a surface. 
For a fixed type, the number of maps on a surface with $\chi(S) \neq 0$ is 
finite. Since closed surfaces are countable by their genus and cross-cap 
numbers, so the number of quasi-vertex-transitive maps on the
plane are countable (up to isomorphism).
\begin{examples}
The examples of homogeneous maps that appear in the literatures are mostly
vertex-transitive or have infinitely many vertex orbits. In \cite{EEK1, EEK2},
the authors showed that the maps of types $[p^q]$ and $[2p_1, 2p_2, \cdots, 2p_k]$
are vertex-transitive, and also determined their minimal characteristics.
One can construct vertex-transitive maps of various other types by applying standard
operations (e.g. truncation, rectification, cantellation) on these types. Since most of
these operations are reversible, one can also determine the minimal characteristics of
the derived maps.
\end{examples}
\begin{examples}
Quasi-vertex-transitive maps of a fixed type are, in general, not unique. In fact, there 
are non-isomorphic vertex-transitive maps of the same type, e.g., $[(6n)^3, 
3]$, $[4^3, 6]$.
\end{examples}
\section{Quasi-vertex-transitive maps of types $[p^3, 3]$} \label{periodic}
Among all the homogeneous maps on the sphere, only the map of type $[4^3, 3]$
is not vertex-transitive, and it corresponds to the boundary of pseudorhombicuboctahedron
\cite{BG09}. All the homogeneous maps on the plane of the Euclidean types are
vertex-transitive, but most of the maps of the hyperbolic types are not so. Here we give
a simple proof of the following result from \cite{GS79}. 
\begin{proposition} \label{verp}
There doesn't exist vertex-transitive map on the plane of type $[p^3, 3]$,
for $p \equiv 1$ mod($6$).
\end{proposition}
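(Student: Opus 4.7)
The plan is to assume $K$ is a vertex-transitive map of type $[p^3,3]$ on the plane with $p\equiv 1\pmod{6}$, and derive a contradiction via orbit counting on the $p$-gons. First I would invoke Fact~\ref{geom} and Lemma~\ref{homopol} to regard $K$ as a geometric polyhedral map on $\mathbb{H}^2$, with $G:=\mathrm{Aut}(K)$ a discrete isometry group acting transitively on the vertex set $V$. I would then exploit the cyclic face sequence $[p,p,p,3]$ at each vertex $v$ to canonically distinguish the ``middle'' $p$-gon $Q(v)$ opposite the triangle, yielding a $G$-equivariant map $Q\colon V\to\{p\text{-gons}\}$. For each $p$-gon $P$, set $m(P):=|Q^{-1}(P)|$; at each of the $p-m(P)$ vertices of $P$ where $P$ is a ``side'' $p$-gon, exactly one of $P$'s two incident edges is shared with the adjacent triangle, so a double count gives $p-m(P)=2\cdot\#\{\text{triangle-edges of }P\}$. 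Hence $p-m(P)$ is even; since $p$ is odd, $m(P)$ is odd, and $\sum_P m(P)=|V|$.

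Next I would observe that the only non-identity element that can stabilize $v$ is the reflection through $v$ bisecting both the triangle and $Q(v)$, so $|\mathrm{Stab}_G(v)|\in\{1,2\}$. Orbit counting on the set of incident pairs $(v,P)$ (with $P$ a $p$-gon at $v$), summed over $G$-orbits $[P_i]$ of $p$-gons, yields the identities
\[
\sum_i \frac{1}{|\mathrm{Stab}_G(P_i)|}=\frac{3}{p\,|\mathrm{Stab}_G(v)|},\qquad \sum_i \#\bigl\{\mathrm{Stab}_G(P_i)\text{-orbits on }V(P_i)\bigr\}=\begin{cases}3 & \text{if }|\mathrm{Stab}_G(v)|=1,\\ 2 & \text{if }|\mathrm{Stab}_G(v)|=2.\end{cases}
\]
Each $\mathrm{Stab}_G(P_i)$ is a subgroup of the dihedral group $D_p$. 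Using that $p\equiv 1\pmod 6$ forces $\gcd(p,6)=1$ (so every divisor of $p$ is odd and coprime to $3$), I would run a case analysis on these identities to conclude $|\mathrm{Stab}_G(P_i)|\geq p$ for every $i$, i.e.\ $\mathrm{Stab}_G(P_i)$ acts transitively on $V(P_i)$.

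Finally, transitivity means every $p$-gon plays a single role (middle or side) at all of its vertices. ``Always side'' would give $p-m(P)=p$ odd, contradicting the parity above; so every $p$-gon is ``always middle'' with $m(P)=p$, whence $\sum_P m(P)=p\cdot(3|V|/p)=3|V|$, contradicting $\sum_P m(P)=|V|$. The main obstacle will be the Diophantine/subgroup step concluding $|\mathrm{Stab}_G(P_i)|\geq p$: I need to verify that $\gcd(p,6)=1$ excludes every ``small-stabilizer'' possibility---trivial, a lone reflection $\mathbb{Z}/2$, or $C_d$ or $D_d$ with $d\mid p$ and $d<p$---from producing a solution in which the $m(P_i)$ take multiple values while still satisfying both count identities.
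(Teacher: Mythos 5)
Your opening moves are sound and in fact reproduce the paper's key observation: the equivariant ``middle $p$-gon'' map $Q$ and the parity count $p-m(P)=2\cdot\#\{\text{triangle-edges of }P\}$, which forces $m(P)$ to be odd and hence $\geq 1$, are exactly what the paper exploits (there $m(P)$ is the number $j$ of vertices at which a triangle meets $P$ only in that vertex). But the proposal has a genuine gap at precisely the step you flag as the crux: the claim that the two orbit-counting identities force $\mathrm{Stab}_G(P_i)$ to act transitively on $V(P_i)$ for every orbit is asserted, not proved, and it carries the whole weight of the argument. The case analysis does appear to close --- a subgroup $C_d\leq D_p$ with $d\mid p$, $d<p$, has $p/d\geq 3$ orbits on $V(P)$ (using that $p$ is odd), a subgroup $D_d$ has $(p/d+1)/2$ orbits, so any intransitive stabilizer contributes at least $2$ to the second sum, equals $2$ only when $3\mid p$, and the one surviving configuration (a single orbit with stabilizer $D_{p/5}$ when $5\mid p$, e.g.\ $p=25$) is killed only by the first identity --- but none of this is in the proposal, and without it there is no proof. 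A second, structural defect: both displayed identities are obtained by dividing through by $|G|$ and by counting $|V|$, and for a map on the plane both are infinite. You must first pass to a compact quotient (Proposition~\ref{vertrancor}, refined so that the finite-index torsion-free subgroup is \emph{normal} and the residual finite group still acts vertex-transitively) or run the counts asymptotically over large discs, as the paper does; as written the equations are not meaningful. The same issue affects the closing comparison $\sum_P m(P)=|V|$ versus $3|V|$, although that particular contradiction has a cheap local repair: if every $p$-gon were ``always middle'', then all three $p$-gons at any vertex $v$ would be middle there, while $Q(v)$ is unique.

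For contrast, the paper reaches the conclusion with no stabilizer analysis at all. Parity gives every $p$-gon a distinguished ``middle'' vertex; if two $p$-gons have different incidence patterns with the triangles, their middle vertices cannot lie in the same vertex orbit and we are done; if all $p$-gons are alike, each contributing $i$ triangle-edges and $j$ middle vertices, a density count of triangle--pentagon incidences over a large disc (each triangle contributes $3$ to both tallies) forces $i=j$, and then $p=2i+j=3i$ contradicts $3\nmid p$. Thus $p\equiv 1\pmod 6$ enters only as ``$p$ odd'' plus ``$3\nmid p$''. Your route relocates the same arithmetic into the subgroup lattice of $D_p$; it can likely be completed, but it is substantially heavier and, as submitted, unfinished at its decisive step.
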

\begin{proof} 
We say that a $p$-gon in a map $K$ of type $[p^3, 3]$ is of type-$k$ if the number of
triangles intersecting the $p$-gon is $k$. Suppose there are more than one type of 
$p$-gons in $K$. Since $p$ is odd by our hypothesis, each $p$-gon has at least one vertex
such that a triangle intersect the $p$-gon only by that vertex. Consider two such vertices of two different types
of $p$-gons in $K$, then clearly no automorphism of $K$ can map one to the other, so $K$
is not vertex-transitive. Suppose now that all $p$-gons in $K$ are of the same type, say of the
type-$l$. Let us consider a disc region $V_R$ in the plane of radius $R$ containing a large number
of vertices of $K$. Then we have a count $ct(e)$ of edges of $K$ that are common to a $p$-gon
and a triangle in the region $V_R$. We also have a count $ct(v)$ of vertices that are the only
intersection of a $p$-gon and a triangle in $V_R$. Since each triangle contribute $3$ to
both the counts, therefore $ct(e) \sim ct(v)$ for sufficiently large $R$. If each $p$-gon contributes
$i$ and $j$ (with $i+j=l$) to $ct(e)$ and $ct(v)$ respectively, then we must have $i=j$. It is easy to
see that $i \neq j$ unless $p$ is multiple of $3$. Since $p$ is odd, therefore the proposition follows. 
\end{proof} 
We shall now prove Theorem \ref{p3qth} by applying the Poincaré polygon theorem. The theorem,
in particular, gives sufficient conditions for a convex hyperbolic polygon with finitely many sides
(geodesic segments) to be a fundamental polygon for a discrete subgroup of isometries of the
hyperbolic plane. We refer the readers to \cite{BE95} for more details about the theorem.
\begin{proof}[\textbf{Proof of Theorem \ref{p3qth}}] We will construct a 
fundamental polygon for the dual of a geometric map of type $[p^3, 3]$ for 
$p \geq 5$, $p$ odd. By Fact \ref{fact1}, for the cyclic-tuple $[p^3, 3]$,
 we have a configuration of three regular
hyperbolic $p$-gons and one regular hyperbolic triangle around a vertex,
 see Figure \ref{fdpol1}. Let $P_4:=\{v_1, v_2, v_3, v\}$ be the 
 hyperbolic quadrilateral obtained by joining the incenters of the faces by 
 geodesic segments, where $v_2$ is the incenter of the triangle. Then we have
 $\angle v_1= \angle v_3= \angle v =2 \pi/p$, $\angle v_2 = 2 \pi/3$. So we can form a cyclic sequence of
 \begin{figure}[H] 
 \begin{minipage}{.50\textwidth} \hspace{1cm}
\includegraphics[scale=1.2]{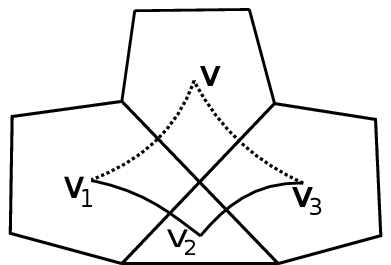}%
\captionof{figure}{Quadrilateral dual face $P_4$ for the type $[5^3, 3]$}
\label{fdpol1} \vspace{-2.8cm}
\end{minipage}%
\begin{minipage}{.50\textwidth} \hspace{0.5cm}
\includegraphics[scale=1.0]{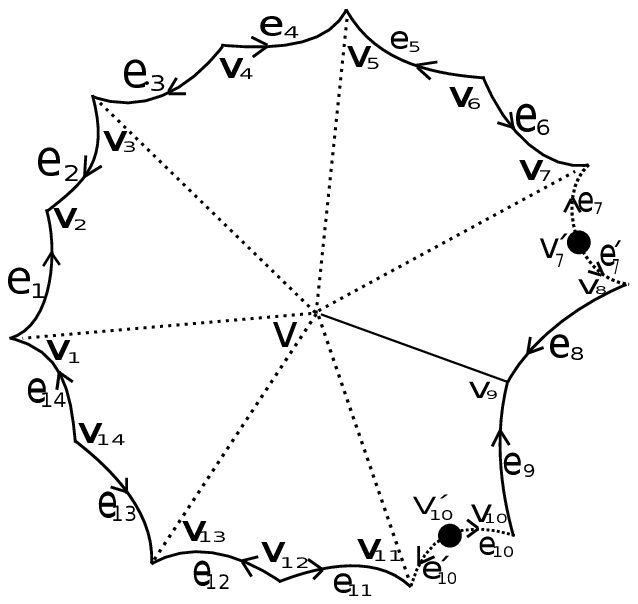}%
\captionof{figure}{Fundamental polygon $F(7)$ for the type $[7^3, 3]$ }
\label{fdpol2}
\end{minipage}
\end{figure}
polygons $[V_1, V_2, \cdots, V_{p}]$ around the vertex $v$ such that
a) $V_{(p-1)/2}= P_4$, b) $V_i$ is obtained by reflecting $P_4$ successively in its
dotted sides $\{v, v_1\}$ and $\{v, v_3\}$, for 
$i=1, 2, \cdots, \frac{p-1}{2}-1$, $\frac{p-1}{2}+ 1, \cdots, p-2$, c) $V_{p-1}$ and
$V_{p}$ are obtained by rotating $V_{p-2}$ and $V_1$ respectively about the
midpoint of their dotted sides by an angle $\pi$. Then the edges of the faces around
$v$ that are not adjacent to $v$ form a $2p$-gon which we may denote
by $\{v_1, v_2,$ $ \cdots, v_{2p}\}$, see Figure \ref{fdpol2} for $p=7$.
We shall now prove that the hyperbolic $(2p+2)$-gon $F(p)$ obtained from $2p$-gon
$\{v_1, v_2,$ $ \cdots, v_{2p}\}$ by declaring the
midpoints of its two dotted sides $\{v_p, v_{p+1}\}$ and $\{v_{p+3}, v_{p+4}\}$
as vertices, denoted by $v'_p$ and $v'_{p+3}$ respectively,
is our required fundamental polygon. 
Let $e_i$ denote the side $\{v_i, v_{i+1}\}$ for $i\neq p, p+3$, and 
$e_p$, $e'_{p}$, $e_{p+3}$, $e'_{p+3}$ denote the sides $\{v_p, v'_p \}$, $\{v'_p,
v_{p+1}\}$, $\{v_{p+3}, v'_{p+3}\}$, $\{v'_{p+3}, v_{p+4} \}$
respectively. The sides are equipped with orientations given by the arrows pointing
outwards from the vertices $ v'_{p}$, $v'_{p+3}$ and $ v_i$, for $i=4, 6, \cdots, p-1, p+5, 
p+7, \cdots, 2p$, and inward to the vertices $v_2$ and $v_{p+2}$. Then we 
consider the side-pairing isometries $[e_1, e_{p+2}]$, $ [e_2, e_{p+1}]$,
$[e_{p}, e'_{p} ]$, $[e_{p+3}, e'_{p+3}]$, 
$ [e_{i}, e_{i-1} ]$ for $ i=4, 6, \cdots $ $ p-1, $ $p+5, p+7, \cdots, 2p$. So
the elliptic cycles are
\[ \{ v_2, v_{p+2}\}, \{ v_3, v_{5}, \cdots, v_{p}, v_{p+1} \}, \{ v_{p+3}, v_{p+4}, 
v_{p+6}, \cdots v_{2p-1}, v_1\}, \{ v'_{p}\}, \{ v'_{p+3}\},\] \[ \{v_i\} \ 
\mathrm{for} \ i=4, 6, \cdots, p-1, p+5, p+7, \cdots, 2p. \]
We verify that
\begin{multline} \angle v_2 + \angle v_{p+2} = 2 \pi, \hspace{0.5cm} \angle 
v_3+ \angle v_{5}+ \cdots + \angle v_{p} + \angle v_{p+1}= 2 \pi, \\ \angle 
v_{p+3}+ \angle v_{p+4}+ \angle v_{p+6}+ \cdots +\angle v_{2p-1} + \angle v_1 
= 2 \pi, \\ \angle v'_{p}= \angle v'_{p+3}=\pi, \hspace{0.5cm} \angle v_i = 2 
\pi/p \hspace{0.3cm} \mathrm{for} \ i=4, 6, \cdots, p-1, p+5, p+7, \cdots, 2p.
 \end{multline}

So, all the elliptic cycles are proper elliptic cycles. Therefore, by the Poincaré 
polygon theorem, $F(p)$ is the fundamental polygon for the group 
of isometries of the hyperbolic plane generated by the side pairings. Thus we 
have a geometric map $K$ on $\mathbb{H}^2$ whose faces are copies 
of $F(p)$. It is easy to see that the map obtained by subdividing 
each face of $K$ into quadrilaterals $P_4$s is the dual of a map $\tilde{K}$ 
of type $[p^3, 3]$. Clearly, the map $\tilde{K}$ has at most $p$ vertex orbits. 
This completes the proof of the theorem.
 \end{proof}
 
\section{ Polyhedral maps of type $[5^3, 3]$} 
The vertex-transitive polyhedral maps on orientable surfaces of genus $2$, 
$3$ and $4$ have been studied in \cite{KN12}. The question of the existence 
of polyhedral maps of all the possible types on the surface with $\chi=-1$ has 
been settled except for the type $[5^3, 3]$ (in a private communication with 
Dipendu Maity, will appear elsewhere). Here we take up the task of verifying 
the remaining case. 

\begin{proof}[\textbf{Proof of Theorem \ref{53th}}] Let us assume that $K$ be a 
polyhedral map of type $[5^3, 3]$ on the surface with $\chi=-1$. Then the 
number of vertices, edges and faces are $-15, -30$ and $-14$ respectively. 
The number of triangles and pentagons in $K$ are $5$ and $9$ respectively, 
and the triangles are mutually disjoint. We call a pentagon is of type-$n$ if the 
number of triangles adjacent to it is $n$. Let $p_n$ be the number of type-$n$ 
pentagons in $K$ for $n=3, 4, 5$. Then we have 
\begin{equation}\label{typeq} 
p_3 + p_4 + p_5=9. \end{equation} 
Let us count the pairs $(P, T)$ such that $P$ is a pentagon adjacent to the 
triangle $T$. Since the total number of pentagons adjacent to each triangle is 
$6$, so the number of distinct pairs $(P, T)$ counted in two ways yield the 
equation
 \begin{equation}\label{typect} 3p_3+4p_4+5p_5=6 \times 5=30. 
\end{equation}
 Solving the equations \ref{typeq} and \ref{typect}, we obtain $(p_3, p_4, 
p_5)=(6, 3, 0)$ or $(7, 1, 1).$ 
\vspace{-0.3cm} \begin{multicols}{2} \noindent
\textbf{\underline{Case 1:}} 
We consider the possibility that there are three mutually disjoint type-$4$ 
pentagons $P_1, P_2, P_3$ in $K$. Then, the union of their vertices comprises
all $15$ vertices of $K$. Consequently, every vertex has one and only one
type-$4$ pentagon adjacent to it. Starting with one such
type-$4$ pentagon, say $P_1$, a part of $K$ can be represented by Figure 
\ref{disj}, with $P_2$ and $P_3$ being other two type-$4$ pentagons. 
\begin{figure}[H]
 \begin{center}
 \includegraphics[scale=0.40]{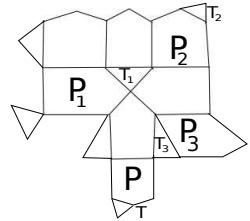}
 \end{center}
 \caption{Part of $K$ of Case 1}
 \label{disj}
\end{figure}
\end{multicols}\vspace{-0.5cm}
It follows that the triangles $ T_1, T_2, T_3$ are mutually disjoint. 
Since $K$ is a polyhedral map therefore the type-$4$ pentagon $P \neq P_1, P_3$.
But $P = P_2 \implies T$=$T_2 \implies$ $T_1=T_3$, a contradiction, therefore $P \neq 
P_2$ either. This is again impossible since $p_4 = 3$. 
\newline
\textbf{\underline{Case 2:}} If $K$ does not have three mutually disjoint type-$4$ pentagons,
then for both the solutions of $(p_3, p_4, p_5)=(6, 3, 0)$
or $(7, 1, 1)$ above, there is a vertex, say $v_1 \in V(K)$ which is not a vertex of any of the
type-$4$ pentagons. In such a situation, without loss of generality, a portion of $K$ can 
be represented by Figure \ref{nondj3}. The possible values of $\mathrm{a}, 
\mathrm{b}=v_{2}, v_{4}, v_{5}, v_{14},v_{15}$, and $\mathrm{c}, 
\mathrm{d}=v_{3}, v_{10}, v_{11}, v_{12}, v_{13}$. We now check their viability. 
 \newline
 \textbf{1)} $\mathrm{c}=v_{3}$ implies either $\mathrm{v}=2$ or $\mathrm{d}
=2$, which is not possible.
 \newline
 \textbf{2)} Similarly, $\mathrm{b} = v_2$ is not possible by the symmetry of 
Figure \ref{nondj3} about the vertex $v_1$. 
\newline
\textbf{3)} $\mathrm{c}=v_{11}, \mathrm{d}=v_{3} \implies \mathrm{u}=v_{13}$ 
($\mathrm{u} \neq v_{7}, v_{8}$ for degree reason) $\implies \mathrm{b}
=v_{14} \implies \deg({v_{14}})>4$, which is not possible.
\newline
\textbf{4)} $\mathrm{c}=v_{12}, \mathrm{d}=v_{13} \implies$ no choice left for $
\mathrm{b}$ (as $v_{13}, v_{10} \in \{v_{9}, v_{10}, v_{b}, v_{a}, v_{13} \})$.
\newline
\textbf{5)} $\mathrm{c}=v_{13}, \mathrm{d}=v_{12} \implies$ no choice left for $
\mathrm{u}$.\newline
\textbf{1)}, \textbf{2)} and \textbf{3)} together implies that $(\mathrm{c}, 
\mathrm{d})$ must be an edge of the triangle $\{v_{10}, v_{11}, v_{12}\}$, and again 
by the symmetry of Figure \ref{nondj3}, $(\mathrm{a}, \mathrm{b})$ of the 
triangle $\{v_{4}, v_{5}, v_{15}\}$. 
\newline
\textbf{6)} $(\mathrm{c}, \mathrm{d}) \neq (v_{10}, v_{11}), (v_{11}, v_{10})$ as 
the pentagon
$\{v_{10}, v_{11}, v_{1}, v_{3}, v_{9}\}$ $\neq \{v_{10}, v_{11}, v_{5}, v_{6}, 
v_{14}\}$.\newline
\textbf{7)} $\{\mathrm{c}, \mathrm{d}\}= \{v_{11}, v_{12}\} \implies \mathrm{u}
=v_{13} \implies \mathrm{a}, \mathrm{b}=v_{15}, v_{5} \implies \deg(v_{5})>4$. 
\newline
\textbf{8)} $\{\mathrm{c}, \mathrm{d}\}= \{v_{12}, v_{11}\} \implies \mathrm{b}
=v_{4} \implies$ no choice left for $\mathrm{u}$. \newline
\textbf{9)} $\{\mathrm{c}, \mathrm{d}\}= \{v_{12}, v_{10}\} \implies$ $
(\mathrm{a}, \mathrm{b})$ is not an edge of the triangle $\{v_{4}, v_{5}, 
v_{15}\}$. \newline
\textbf{10)} $\{\mathrm{c}, \mathrm{d}\}= \{v_{10}, v_{12}\}$ leads to Figure 
\ref{longcycle5}. 
 \begin{figure}[H]
 \begin{minipage}{.40\textwidth}
\includegraphics[scale=0.30]{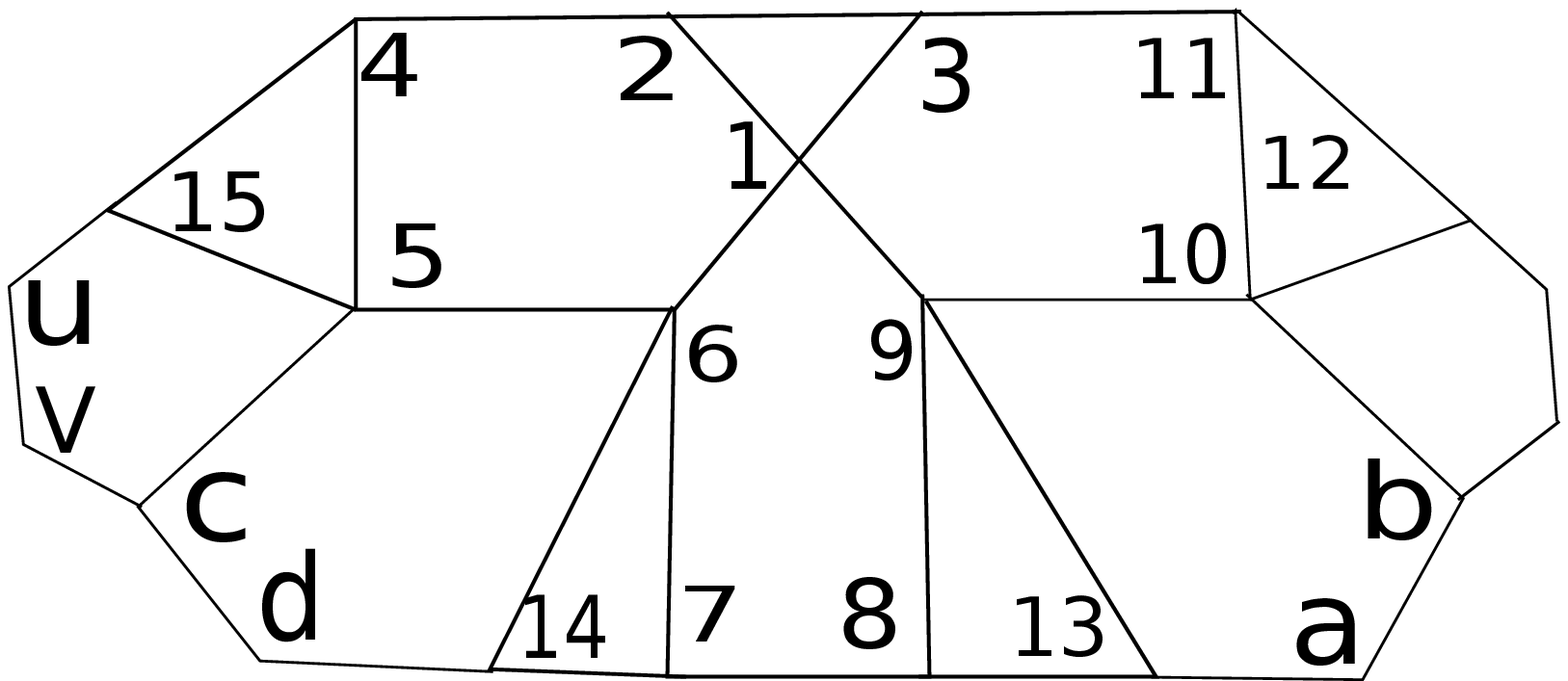}
\captionof{figure}{Part of $K$ of Case 2}
\label{nondj3}
\end{minipage}%
\begin{minipage}{.60\textwidth}
\includegraphics[scale=0.50]{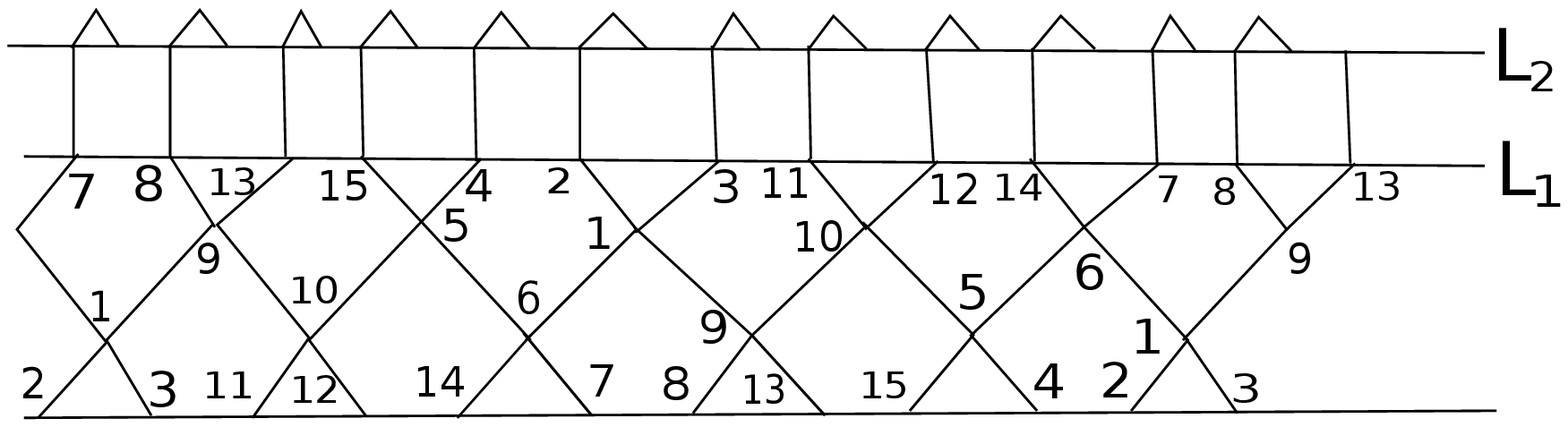}
\captionof{figure}{Cyclic sequence for $\{\mathrm{c}, \mathrm{d}\}= \{v_{10}, 
v_{12}\}$ }
\label{longcycle5}
\end{minipage}
\end{figure}
We observe now that, because of cyclic repetition of vertices on 
the horizontal line $L_1$, the triangles on the horizontal line $L_2$ must be 
placed either in the shown order or all translated by one edge. In both the cases, if 
we form a graph with vertices representing the triangles of the map and edges 
representing type-3 pentagons having common 
edges with two triangles, then we would have a graph with $5$ vertices each 
having degree $3$ and $6$ edges, which is absurd. 
\newline
Thus, we have exhausted all the possibilities in Case 2 as well. This leaves us 
with no choice but concluding that $K$ is not polyhedral. Without 
the polyhedral condition, however, the example in Figure \ref{53}(a) is a map of type 
$[5^3, 3]$ on the surface with $\chi=-1$, $ \mathbb{R} P^2 \# 
\mathbb{R} P^2 \#\mathbb{R} P^2$. This completes the proof of the first 
part of the theorem. \newline
 In view of Lemma \ref{liftmap}, the example in Figure \ref{53}(a) suggests that 
for a polyhedral map we could look on surfaces with $\chi=-2$, i.e., on double 
torus $\mathbb{T}^2$ (orientable) and $\mathbb{R} P^2 \# \mathbb{R} P^2 \# 
\mathbb{R} P^2 \#\mathbb{R} P^2$ (non-orientable). Indeed, we found the 
following examples shown in Figures \ref{53}(b) and \ref{53}(c).

\begin{figure}[H]
\centering
\subfigure[Non-polyhedral]{
 \includegraphics[scale=0.35]{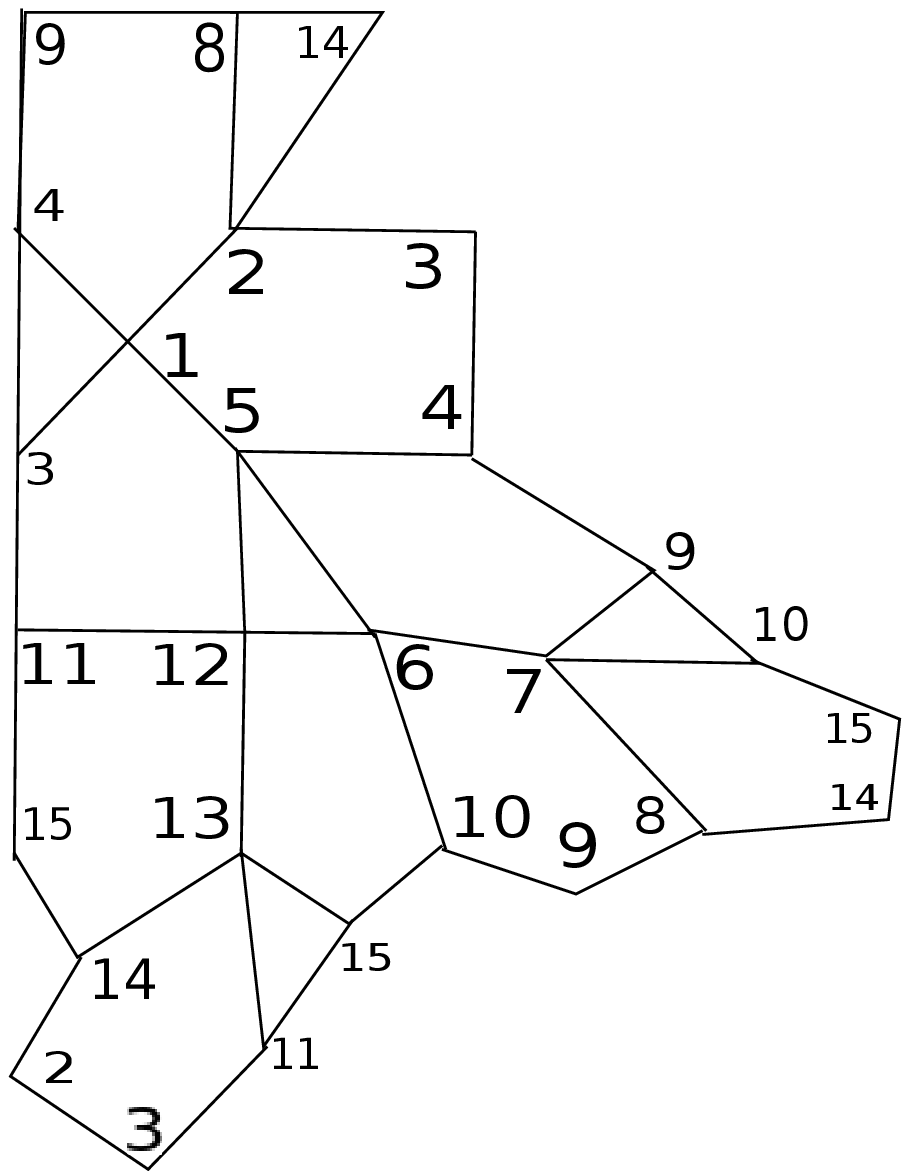}%
} \hspace{1cm} 
\subfigure[Orientable]{
 \includegraphics[scale=0.35]{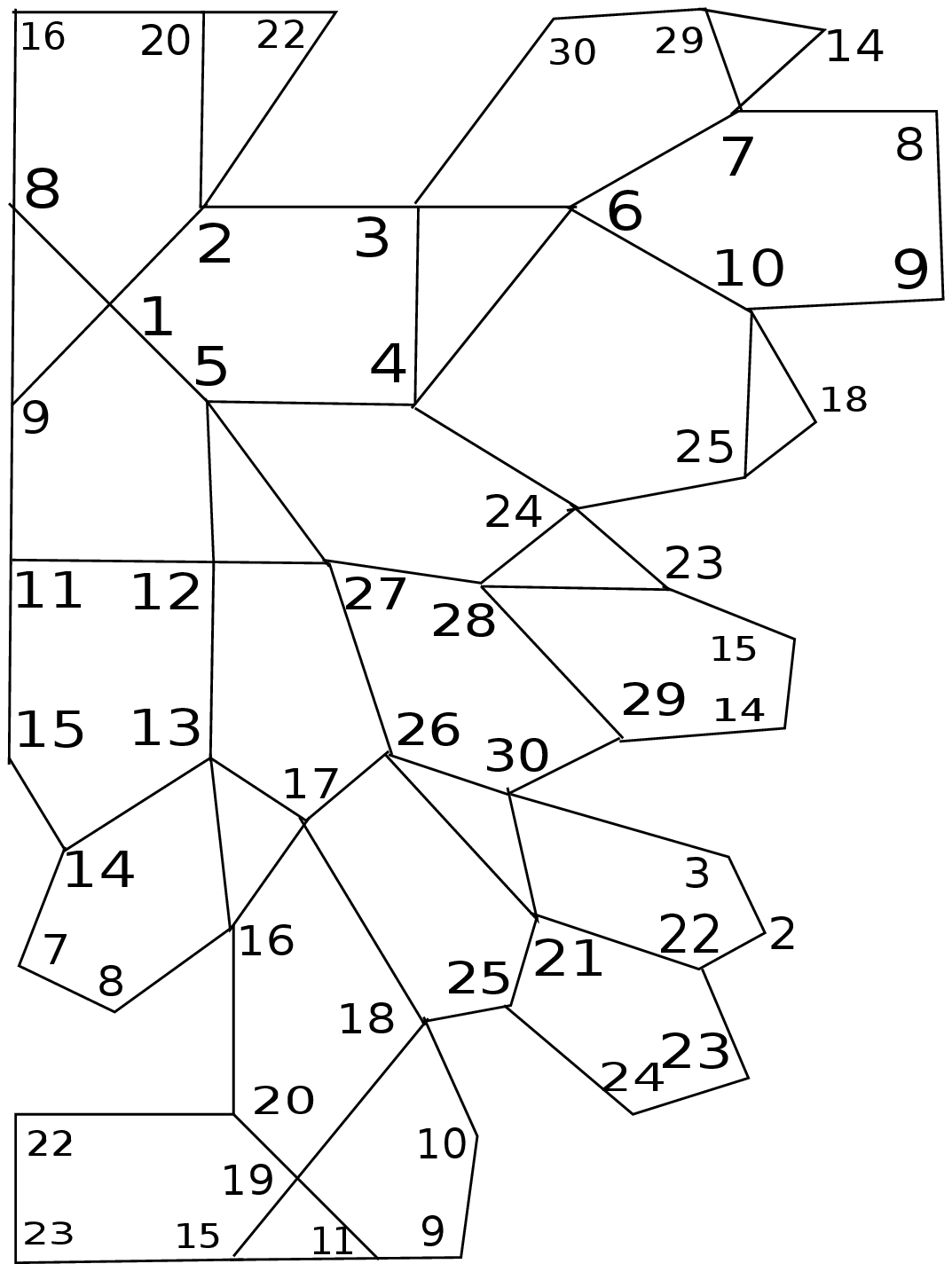}%
} \hspace{1cm}
\subfigure[Non-orientable]{
 \includegraphics[scale=0.35]{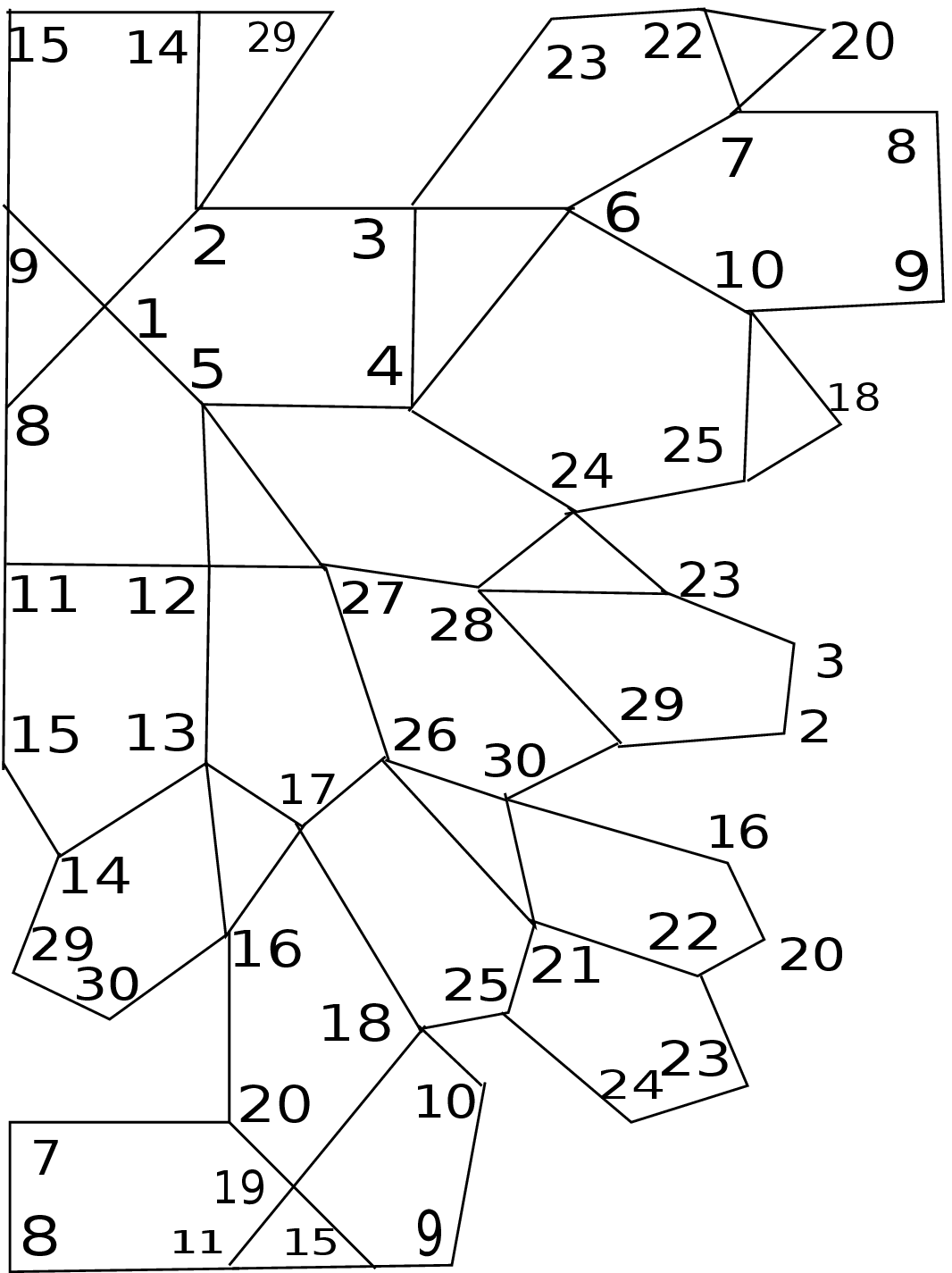}%
} \label{nonori5}
\caption{ Maps of type $[5^3, 3]$ on surface with $\chi=-1$ and $\chi=-2$} 
\label{53} 
\end{figure}

This completes the proof of Theorem \ref{53th}.
\end{proof}

\section{Acknowledgements} 
The author is grateful to Basudeb Datta and Subhojoy Gupta, for many 
valuable suggestions. The author wishes to thank an anonymous reviewer
for a number of helpful comments and suggestions. The author was supported
by the National Board of Higher Mathematics (No. 4485/2017/NBHM) during
the academic year 2018--2019.

\bibliographystyle{amsplain}

\bibliography{bibtile}
\end{document}